\theoremstyle{plain}\newtheorem{Theorem}{Theorem}[section]
\theoremstyle{plain}
\theoremstyle{plain}\newtheorem{Corollary}[Theorem]{Corollary}
\theoremstyle{plain}\newtheorem{Lemma}[Theorem]{Lemma}
\theoremstyle{plain}\newtheorem{Proposition}[Theorem]{Proposition}
\theoremstyle{definition}
\theoremstyle{definition}
\theoremstyle{definition}
\theoremstyle{definition}\newtheorem{Remark}[Theorem]{Remark}
\theoremstyle{definition}
\theoremstyle{definition}
\theoremstyle{definition}
\theoremstyle{definition}
\theoremstyle{definition}
\theoremstyle{definition}
\theoremstyle{definition}
\theoremstyle{definition}
\theoremstyle{definition}\newtheorem{Notation/Definition}
[Theorem]{Notation/Definition}
\theoremstyle{definition}\newtheorem{noth}[Theorem]{}
\def\CH{{\mathcal{H}}}
\def\F{{\mathbb F}}
\def\Z{{\mathbb Z}}
\def\dim{\mathrm{dim}}
\def\Hom{\mathrm{Hom}}           
\def\ker{\mathrm{ker}}
\def\rad{\mathrm{rad}}
\def\soc{\mathrm{soc}}
\newcommand{\PSL}{\operatorname{PSL}}
\newcommand{\SL}{\operatorname{SL}}
\newcommand{\PSU}{\operatorname{PSU}}
\newcommand{\SU}{\operatorname{SU}}
\newcommand{\Sp}{\operatorname{Sp}}
\def\bfG{\mathbf{G}}
\def\bfB{\mathbf{B}}
\def\bfU{\mathbf{U}}
\def\bfT{\mathbf{T}}
\def\LL{\text{\rm LL}}
\begin{document}
\title{The projective cover of the trivial representation
for a finite group of \\ Lie type in defining characteristic
}
\date{\today}
\author{{Shigeo Koshitani and J\"urgen M\"uller}}
\address{Department of Mathematics, Graduate School of Science,
Chiba University, 1-33 Yayoi-cho, Inage-ku, Chiba 263-8522, Japan.}
\email{koshitan@math.s.chiba-u.ac.jp}
\address{Arbeitsgruppe Algebra und Zahlentheorie,
Bergische Universit\"at Wuppertal,
Gau{\ss}-Stra{\ss}e 20, D-42119 Wuppertal, Germany.}
\email{juergen.mueller@math.uni-wuppertal.de}
\thanks{}
\keywords{Modular representations, projective cover of the trivial module,
finite groups of Lie type in defining characteristic}
\subjclass[2000]{20C05,20C20,20C33}

\begin{abstract} 
We give a lower bound of the Loewy length of the projective cover of
the trivial module for the group algebra $kG$ of a finite group
$G$ of Lie type defined over a finite field of odd characteristic $p$,
where $k$ is an arbitrary field of characteristic $p$.
The proof uses Auslander-Reiten theory.
\end{abstract}

\maketitle

\section{Introduction}

One of the main problems in the modular representation theory
of finite groups is obtaining classes of finite groups $G$
such that the group algebras $kG$ have specific ring-theoretical 
properties, where $k$ is a field of positive characteristic $p$,
just as Brauer stated in \cite[Problem 16]{Brauer1963}.
Examples of such properties are the Loewy length $\LL(P(k_G))$ 
of the projective cover $P(k_G)$ of the trivial $kG$-module $k_G$,
and the `first' Cartan invariant $c_{11}(G):=[P(k_G)\colon k_G]$ 
of the group algebra $kG$, that is the multiplicity of 
$k_G$ as a composition factor of $P(k_G)$.

In the present paper, we are interested in the questions for
representations of finite groups of Lie type in their defining
characteristic;
see also for example \cite[(2.6)]{Humphreys1985} and 
\cite[Section 11.4]{Humphreys2006}. In even characteristic we 
are in a good shape indeed: Let $G$ be a simple finite group of 
Lie type defined over a finite field of characteristic $p=2$.
Then it follows from results of Okuyama \cite{Okuyama} and
Erdmann \cite{Erdmann1977} that $\LL(P(k_G))=3$ if and only if $G=\SL_3(2)$,
see \cite[Theorem 3.3]{Koshitani2014}; and by 
\cite[Theorem 1.2]{Koshitani2014} we always have $\LL(P(k_G))\neq 4$.
Moreover, by \cite[Lemma 4.5]{Koshitani2015} we have 
$c_{11}(G)=2$ if and only if $G=\SL_3(2)$.
Hence in conclusion for this class of groups we have 
$\LL(P(k_G))\geq 5$ if and only if $c_{11}(G)\geq 3$ 
if and only if $G{\not\cong}\,\SL_3(2)$.

Here we are now interested in the odd characteristic case,
and the purpose of this paper is to present the following theorem.

\begin{Theorem}\label{Lie}
Assume that $p$ is an odd prime, and $G$ is a simple or an almost 
simple finite group of Lie type (in the sense of \ref{titsthm})
defined over a finite field of characteristic $p$, such that 
the Sylow $p$-subgroups of $G$ are non-cyclic, that is 
$G\not\in\{\SL_2(p),{}^2 G_2(\sqrt{3})'\}$.
Then the projective cover $P(k_G)$ of the trivial $kG$-module
has Loewy length $\LL(P(k_G))\geq 5$.
\end{Theorem}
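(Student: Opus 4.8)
The plan is to establish the lower bound $\LL(P(k_G)) \geq 5$ via Auslander--Reiten theory, as the abstract advertises. The key structural input is that for a group $G$ with non-cyclic Sylow $p$-subgroups, the trivial module $k_G$ lies in a block of non-cyclic defect, so the stable Auslander--Reiten quiver component containing $k_G$ is not a finite tube but has tree class $A_\infty$ (or is of the relevant infinite type). I would begin by recalling the general principle that $\LL(P(k_G))$ can be bounded below by producing a sufficiently long chain of nonzero maps, or equivalently by controlling the radical/socle layers of $P(k_G)$. The natural Auslander--Reiten-theoretic quantity here is the position of $k_G$ in its AR-component: the almost split sequence terminating in $k_G$,
\begin{equation*}
0 \longrightarrow \Omega^2(k_G) \longrightarrow E \longrightarrow k_G \longrightarrow 0,
\end{equation*}
together with the irreducible maps it encodes, forces constraints on how $k_G$ sits inside $P(k_G)/\mathrm{soc}$ and $\mathrm{rad}\,P(k_G)$.

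Next I would exploit the specific Lie-theoretic structure to locate a concrete module, or a concrete sequence of composition factors, inside $P(k_G)$. The standard tool is the Borel subgroup $\bfB = \bfU \rtimes \bfT$ and the fact that in defining characteristic the projective cover of the trivial module restricts well to $\bfB$ and to the unipotent radical $\bfU$, whose order is a power of $p$. Since $|\bfU|$ is divisible by $p$ and $\bfU$ is noncyclic (this is where the hypothesis excluding $\SL_2(p)$ and ${}^2G_2(\sqrt{3})'$, the rank-one groups with cyclic Sylow, enters), the group algebra $k\bfU$ already has Loewy length at least some explicit bound, and one transports this information upward. Concretely, I expect to show that $P(k_G)$ contains, in distinct radical layers, the modules appearing at the bottom of an almost split sequence, giving at least five distinct Loewy layers: the top $k_G$, an intermediate layer carrying $\Omega^{-1}(k_G)$ or a nontrivial simple module, a middle layer, and symmetrically the bottom layers dictated by self-duality of $P(k_G)$ as a symmetric algebra.

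The main obstacle will be the case analysis forced by the words \emph{simple or almost simple} and by the various families of Lie type: one must verify that the AR-component of $k_G$ is genuinely infinite (equivalently, that the defect group is noncyclic, which follows from the non-cyclic Sylow hypothesis, but the block-theoretic bookkeeping for almost simple groups--- where $k_G$ may lie in a block whose defect group is a Sylow $p$-subgroup of the simple socle --- requires care). In particular, I would need a uniform argument that $\Omega^2(k_G) \not\cong k_G$ and that the heart $\mathrm{rad}\,P(k_G)/\mathrm{soc}\,P(k_G)$ is non-semisimple, which is exactly what rules out $\LL(P(k_G)) \leq 4$. I expect the cleanest route is to combine the self-duality of $P(k_G)$ (forcing the Loewy and socle series to be symmetric) with a parity or length argument on the AR-sequence, so that $\LL = 4$ is excluded on structural grounds and $\LL \geq 5$ follows; the delicate point, and the one I would spend the most effort on, is excluding the symmetric length-$4$ configuration uniformly across all the admissible groups rather than group-by-group.
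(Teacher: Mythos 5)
There is a genuine gap --- in fact two. First, your plan to get a lower bound by restricting to the unipotent radical and ``transporting upward'' Loewy length information from $k\bfU$ cannot work, because a Sylow $p$-subgroup $U$ of $G$ is not normal: restriction to a non-normal $p$-subgroup gives no lower bound on the $kG$-Loewy length. Concretely, for $G=\SL_2(p)$ one has $P(k_G){\downarrow}_U\cong kU\oplus kU$, of $kU$-Loewy length $p$, while $\LL(P(k_G))=3$; such a transfer is valid only when the $p$-subgroup is normal (then $J(kU)\subseteq J(kG)$), which is useless here. Second, your AR-theoretic analysis is aimed at the wrong module. The dangerous configuration to exclude is $\LL(P(k_G))=3$, in which the heart $\CH(k_G)$ is a \emph{single simple module} $S$ (by Webb's theorem); but a simple heart is indecomposable, so in that configuration $k_G$ \emph{does} lie at the end of its AR-component, and the almost split sequence ending in (a Heller translate of) $k_G$ yields no contradiction at all. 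Your stated goal of proving the heart of $P(k_G)$ non-semisimple cannot be reached this way; also note that a non-semisimple heart would only rule out $\LL\leq 3$, not $\LL\leq 4$ as you claim, so your bookkeeping is off by one even granting it.

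What is actually needed, and what the paper does, is different in both respects. The exclusion of $\LL(P(k_G))=4$ is a general theorem of Koshitani for odd $p$ (cited, not proved by any ``parity/self-duality'' argument); this reduces to $\LL=3$, i.e.\ $\CH(k_G)=S$ simple. The contradiction is then derived at the \emph{other} simple module $S$: from $c_{1S}(G)=c_{S1}(G)=1$, the uniserial module with factors $S$ and $k_G$, and a splitting lemma resting on self-duality, one gets $\CH(S)\cong k_G\oplus\CH'$; after discarding the degenerate cases (which force Sylow $p$-subgroups of order $3$, excluded by hypothesis via the finite/tame classification), $\CH(S)$ is properly decomposable, so $S$ does \emph{not} lie at the end of its AR-component. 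This contradicts the Kawata--Michler--Uno theorem --- the key external input, which the paper moreover has to extend to the almost simple groups by a case analysis --- asserting that every non-projective simple module of such a $G$ lies at the end of its component. Your proposal never makes this transfer from $k_G$ to $S$ and never invokes Kawata--Michler--Uno, and these two steps are the substance of the proof.
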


This sheds new light on a couple of our earlier 
results in \cite{KoshitaniKuelshammerSambale2014}. Firstly,
we get the following, providing an alternative proof of 
\cite[Proposition 4.12]{KoshitaniKuelshammerSambale2014}:

\begin{Corollary}
\label{4_12}
Under the same assumptions as in Theorem \ref{Lie}, the principal
block algebra $B_0(G)$ of $kG$ has the Loewy length $\LL(B_0(G))\geq 5$.
\end{Corollary}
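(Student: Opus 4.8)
The plan is to deduce the corollary directly from Theorem \ref{Lie} by comparing the Loewy length of a module with that of the algebra in which it sits. First I would record the elementary fact that for any finite-dimensional $k$-algebra $A$ and any finitely generated $A$-module $M$ one has $\rad^i(M) = \rad(A)^i\, M$ for all $i \geq 0$. Since $\LL(A)$ is by definition the nilpotency index of the Jacobson radical $\rad(A)$, we get $\rad^{\LL(A)}(M) = \rad(A)^{\LL(A)}\, M = 0$, and hence the upper bound $\LL(M) \leq \LL(A)$ for every $A$-module $M$.

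Next I would apply this with $A = B_0(G)$ the principal block algebra and $M = P(k_G)$. The trivial module $k_G$ lies in the principal block by definition, so its projective cover $P(k_G)$ is a projective indecomposable $B_0(G)$-module. Moreover, because $B_0(G)$ is a direct factor of $kG$ as an algebra, the radical series of $P(k_G)$ computed over $kG$ coincides with the one computed over $B_0(G)$, so that $\LL(P(k_G)) = \LL_{B_0(G)}(P(k_G))$. Combining the displayed bound with Theorem \ref{Lie} then yields
$$\LL(B_0(G)) \geq \LL(P(k_G)) \geq 5,$$
which is the assertion.

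There is essentially no obstacle here, since the corollary is a formal consequence of Theorem \ref{Lie}. The only point genuinely worth checking is that the radical powers of a module belonging to a block are correctly computed inside the block algebra itself; this is immediate from the decomposition of $kG$ into its block direct factors, and it is what legitimises the single inequality $\LL(P(k_G)) \leq \LL(B_0(G))$ on which the whole argument rests.
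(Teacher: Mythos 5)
Your proof is correct and matches the paper's approach: the paper simply states that the corollary ``follows immediately from Theorem \ref{Lie}'', and the content of that immediacy is exactly the standard fact you spelled out, namely that $\LL(P(k_G)) \leq \LL(B_0(G))$ because $P(k_G)$ is a $B_0(G)$-module and radical powers are computed blockwise. Your filling in of the routine details (block decomposition, $\rad^i(M)=\rad(A)^i M$) is accurate and is precisely what the paper leaves implicit.
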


Secondly, our method of proof of Theorem \ref{Lie} also allows to generalize
\cite[Proposition 4.10]{KoshitaniKuelshammerSambale2014}
(which was proved under the additional assumption $p\geq 5$):

\begin{Theorem}\label{4_10}
Let $p$ be an odd prime, let $G$ be a finite group such that $O_{p'}(G)=\{1\}$, 
and assume that the principal block algebra $B_0(G)$ of $kG$ has 
Loewy length $\LL(B_0(G))=4$. Then $F^*(G)=E(G)=O^{p'}(G)$
is a non-abelian simple group.
\end{Theorem}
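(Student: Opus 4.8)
The plan is to prove Theorem \ref{4_10}, which characterizes the structure of a finite group $G$ with $O_{p'}(G)=\{1\}$ whose principal block has Loewy length exactly $4$.

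The plan is to show that every possible deviation from the asserted structure forces $\LL(B_0(G))\geq 5$, contradicting $\LL(B_0(G))=4$. Since $O_{p'}(G)=\{1\}$ we have $F(G)=O_p(G)$ and $F^*(G)=O_p(G)E(G)$ with $[O_p(G),E(G)]=1$, so it suffices to establish three things in turn: (1) $O_p(G)=\{1\}$; (2) writing $E(G)=L_1\times\cdots\times L_t$ as the product of its components, each $L_i$ is non-abelian simple and $t=1$; and (3) $G/E(G)$ is a $p'$-group, so that $O^{p'}(G)=E(G)=F^*(G)$. The standing tools are the Loewy-length estimate for a normal $p$-subgroup $N\trianglelefteq G$,
\begin{equation*}
\LL(B_0(G))\ \geq\ \LL(B_0(G/N))+\LL(kN)-1,
\end{equation*}
the estimate $\LL(B_0(G))\geq\LL(B_0(N))$ for an arbitrary normal subgroup $N\trianglelefteq G$, and the tensor formula $\LL(B_0(L_1\times L_2))=\LL(B_0(L_1))+\LL(B_0(L_2))-1$.

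For step (1) suppose $N:=O_p(G)\neq\{1\}$. By Jennings' theorem $\LL(kN)\equiv 1\pmod{p-1}$, with $\LL(kN)=p$ exactly when $N\cong C_p$ and $\LL(kN)\geq 2p-1$ otherwise. Hence for $p\geq 5$ the displayed inequality already yields $\LL(B_0(G))\geq\LL(kN)\geq p\geq 5$; this is precisely the point at which \cite[Proposition 4.10]{KoshitaniKuelshammerSambale2014} used the hypothesis $p\geq 5$. For $p=3$ the same inequality forces $N\cong C_3$ and $\LL(B_0(G/N))\leq 2$. If moreover $\LL(B_0(G/N))=1$, then $G/N$ is a $3'$-group, so $N$ is a normal Sylow $3$-subgroup and $\LL(B_0(G))\leq\LL(kG)=\LL(kC_3)=3$, a contradiction. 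This pins the remaining difficulty down to the single residual configuration $p=3$, $O_3(G)\cong C_3$, $\LL(B_0(G/O_3(G)))=2$, which I expect to be the main obstacle.

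For step (2), once $O_p(G)=\{1\}$, the abelian subgroup $Z(E(G))\trianglelefteq G$ has its $p$-part in $O_p(G)=\{1\}$ and its $p'$-part in $O_{p'}(G)=\{1\}$, so $Z(E(G))=\{1\}$ and every $L_i$ is non-abelian simple. The product of the components of order prime to $p$ is a normal $p'$-subgroup, hence trivial, so $p\mid|L_i|$ for all $i$. Since a non-abelian simple group of order divisible by $p$ has $\LL(B_0(L_i))\geq 3$, the normal-subgroup and tensor estimates give $\LL(B_0(G))\geq\LL(B_0(E(G)))=\sum_{i=1}^{t}\LL(B_0(L_i))-(t-1)\geq 2t+1$, forcing $t=1$. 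Thus $F^*(G)=E(G)=:L$ is non-abelian simple, and $C_G(L)=Z(L)=\{1\}$ gives $L\leq G\leq\Aut(L)$. For step (3) I must show $G/L$ is a $p'$-group. If $L$ is alternating or sporadic then $\mathrm{Out}(L)$ is a $2$-group, hence a $p'$-group for odd $p$, and $G/L\leq\mathrm{Out}(L)$ is as required. If $L$ is of Lie type in defining characteristic $p$ and $p$ divided $|G/L|$, then $G$ would contain an outer automorphism of order $p$ (a field or, for $p=3$, possibly a triality automorphism), making $G$ an almost simple group of Lie type in defining characteristic whose Sylow $p$-subgroups are non-cyclic (such an automorphism of order $p$ forces the defining field to have degree divisible by $p$, hence to be large, excluding the cases $\SL_2(p)$ and ${}^2G_2(\sqrt{3})'$); Theorem \ref{Lie} would then give $\LL(B_0(G))\geq\LL(P(k_G))\geq 5$, a contradiction. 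The residual possibilities in which $p\mid|\mathrm{Out}(L)|$ for $L$ of Lie type in cross characteristic are cleared by the same circle of Loewy-length estimates applied to the almost simple extension $\langle L,\varphi\rangle$.

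The main obstacle is the residual case isolated in step (1): $p=3$, $O_3(G)\cong C_3$, $\LL(B_0(G/O_3(G)))=2$, where the crude counting that settles $p\geq 5$ breaks down. Here I would use that a radical-square-zero principal block severely restricts $G/O_3(G)$, forcing its Sylow $3$-subgroup, and hence that of $G$, to be small and explicitly describable; I would then apply the Auslander--Reiten-theoretic method underlying Theorem \ref{Lie} directly to $P(k_G)$ to exhibit a non-vanishing fifth radical layer, contradicting $\LL(B_0(G))=4$. Carrying this out uniformly for $p=3$, rather than by a case-by-case inspection of the possible $G/O_3(G)$, is where I expect the real work to lie.
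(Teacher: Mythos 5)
Your proposal has two genuine gaps, one of which you acknowledge and one of which you do not.

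The acknowledged gap is the ``residual configuration'' of your step (1): $p=3$, $O_3(G)\cong C_3$, $\LL(B_0(G/O_3(G)))=2$. You leave this open, offering only an intention to apply Auslander--Reiten theory ``uniformly'' to produce a fifth radical layer, with no actual argument; so as written the proof is incomplete at this point. Ironically, this configuration is vacuous: for odd $p$ no group $\bar{G}$ with $p\mid|\bar{G}|$ has $\LL(B_0(\bar{G}))=2$, since then $P(k_{\bar{G}})$ would be uniserial with both factors trivial, forcing $k_{\bar{G}}$ to be the only simple module in $B_0(\bar{G})$, whence $\bar{G}$ is $p$-nilpotent by Brauer's theorem and $B_0(\bar{G})\cong k\bar{P}$ with $\LL(k\bar{P})=2$, i.e.\ $|\bar{P}|=2$ and $p=2$ (this is exactly the argument inside Proposition \ref{llprop}, which rules out $\LL(P(k))\leq 2$ and $=4$ at once). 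You neither prove nor notice this, and instead flag it as ``where the real work lies.''

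The unacknowledged gap is fatal and sits in step (3), where you must exclude $p\mid|G/L|$. For defining characteristic you invoke Theorem \ref{Lie}, but that theorem (like the Kawata--Michler--Uno Theorem \ref{KawataMichlerUno} behind it) applies only to \emph{simple} groups of Lie type and to the four specific almost simple groups listed in \ref{titsthm}, namely $\Sp_4(2)$, $G_2(2)$, ${}^2G_2(\sqrt{3})$, ${}^2F_4(\sqrt{2})$; in this paper ``almost simple finite group of Lie type'' is \emph{not} the generic notion $L\leq G\leq\Aut(L)$. So extensions by a field automorphism of order $p$ (e.g.\ $\PSL_2(p^3).3$) or by triality ($\POmega_8^+(q).3$, $q$ a power of $3$) --- precisely the groups you need to exclude --- are outside its scope. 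In cross characteristic the situation is worse: $p$ can divide $|\mathrm{Out}(L)|$ (for $p=3$ take $L=\PSL_3(4)$ and $G=L.3$, or diagonal extensions $\PSL_n(q).p$ with $p\mid\gcd(n,q-1)$), and none of your three standing tools gives $\LL(B_0(G))\geq 5$ there: the only normal subgroup available is $L$ itself, yielding merely $\LL(B_0(G))\geq\LL(B_0(L))\geq 3$. ``Cleared by the same circle of Loewy-length estimates'' is not an argument; what is needed is a result of a different kind, namely that $N\unlhd G$ with $p\mid|N|$ and $p\mid|G/N|$ forces $c_{11}(G)\geq 3$. That is exactly how the paper proceeds, and it avoids both of your gaps and any appeal to the classification: set $H:=O^{p'}(G)$ at the outset (so the quotient $G/L$ never needs analysis), get $\LL(B_0(H))=4$ from \cite[Lemma 4.1]{KoshitaniMiyachi2001}, apply Proposition \ref{llprop} to conclude that $\CH(k_H)$ is simple and nontrivial, hence $c_{11}(H)=2$, and observe that any nontrivial proper $N\lhd H$ has $p\mid|N|$ and $p\mid|H/N|$ (because $O_{p'}(H)=\{1\}$ and $O^{p'}(H)=H$), contradicting \cite[Lemma 2.5]{Koshitani2014}; elementary $F^*$-theory then identifies $F^*(G)=E(G)=H$.
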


Our line of reasoning in proving the above results can be summarized
as follows: Letting $G$ be a finite group, 
assume that $\LL(B_0(G))\leq 4$. Then we have $\LL(P(k_G))\leq 4$ as well,
from which we conclude that the heart 
$$ \CH(k_G):=\rad(P(k_G))/\soc(P(k_G)) $$ 
of $P(k_G)$
actually is a simple $kG$-module $S$, where apart from a trivial
exceptional case we have $S\,{\not\cong}\,k_G$.
If now $G$ is a finite group of Lie type, we are in a position to apply the 
Kawata--Michler--Uno Theorem \cite{KawataMichlerUno2001},
which in turn is a specific application of Auslander-Reiten theory
to this class of groups.
Actually, to cover all the cases allowed by our assumptions
we have to extend the Kawata--Michler--Uno Theorem slightly,
which is done by unraveling the strategy of proof in 
\cite{KawataMichlerUno2001}, and checking the necessary conditions
for the additional cases.

This paper thus is organized as follows:
In Section \ref{lietypegrp} we collect the necessary prerequisites 
from the theory of finite groups of Lie type and their representations
in defining characteristic. Moreover, we revisit the 
Kawata--Michler--Uno heorem and its proof, in order to extend it to
the class of groups considered here.
In Section \ref{proofs} we then first prove the general statement on 
$\CH(k_G)$ mentioned in the previous paragraph, and subsequently 
give proofs of our main results.

We would like to stress the fact that our approach stays entirely 
in the realm of modular representation theory, no results of 
ordinary representation theory or character theory are needed, not 
even behind the scenes to prove the Kawata--Michler--Uno Theorem.
Moreover, apart from the case-by-case analysis needed to prove the latter,
our approach is completely structural, without case distinctions.


\begin{noth}{\bf Cartan invariants.}\label{example}
In view of the comments on the even characteristic case 
at the beginning of this section, we have a quick look onto
Cartan invariants:

(a)
Let $p$ be odd and $G$ a finite group.
Then, apart from the above-mentioned exception,
we conclude that in general the condition $\LL(P(k_G))\leq 4$
implies that the Cartan invariant $c_{11}(G)=2$. Hence the question arises
whether the condition $\LL(P(k_G))\geq 5$ implies $c_{11}(G)\geq 3$.
But this is {\it not} true in general, in other words $c_{11}(G)=2$ 
is a strictly weaker condition than $\LL(P(k_G))\leq 4$:

One of the smallest counter-examples is $G:=C_3^2\colon Q_8$ for $p=3$,
where $C_3$ and $Q_8$ are the cyclic group of order $3$ and the
quaternion group of order $8$, respectively, 
and the action of $Q_8$ on the elementary abelian group $C_3^2$ 
is regular. Then we have $c_{11}(G)=2$, although $\LL(P(k_G))=5$;
actually, all projective indecomposable $kG$-modules in $B_0(G)$
have Loewy length $5$, so that $\LL(B_0(G))=5$ as well;
see also \cite[Lemma (4.2) and Lemma (4.3)]{KoshitaniKunugi2001}, 
where infinitely many such examples are given.


(b)
The present paper arose out of an attempt
to approach the question raised in our earlier paper
\cite{KoshitaniKuelshammerSambale2014},  
whether or not for any non-abelian finite simple group $G$ 
having non-cyclic Sylow $p$-subgroups, where $p$ is odd, 
the Cartan invariant $c_{11}(G)\geq 3$.
Unfortunately, by the above comments, the condition $c_{11}(G)=2$ 
is strictly weaker than $\LL(P(k_G))\leq 4$,
so that after all Theorem \ref{Lie} does not help here.

Moreover, this is also related to the recent result 
\cite[Theorem 7.1]{LuebeckMalle2016} 
saying that, if $G$ is a classical simple finite group of Lie type, 
and $l$ is an odd prime such that the Sylow $l$-subgroups are non-cyclic,
then the $l$-modular Cartan invariant $c^{(l)}_{11}(G)\geq 3$.
But in the proof given there the defining characteristic case $l=p$ is
erroneously attributed to
\cite[Proposition 4.12]{KoshitaniKuelshammerSambale2014},
which only states that the Loewy length of the block algebra $B_0(G)$ 
as a whole is $\LL(B_0(G))\,{\not=}\,4$, but nothing comprehensive 
is said about $P(k_G)$ or $c_{11}(G)$.  
Actually, when we were preparing the present manuscript, we 
noticed the above-mentioned gap, and informed the authors of 
\cite{LuebeckMalle2016} about it, and after this manuscript was 
completed we learned that they have been able to close the gap in
\cite{LuebeckMalle2016corr}.
We would like to point out that \cite{LuebeckMalle2016,LuebeckMalle2016corr}
make heavy use of ordinary character theory of finite groups of Lie
type, so that their approach is fundamentally different from ours.
\end{noth}

\begin{noth}{\bf Notation and terminology.}
We shall in particular use the following notation. 
Let $k$ be an arbitrary 
field of positive characteristic $p$, and 
let $A$ be a finite-dimensional $k$-algebra.
Unless stated otherwise we mean by an $A$-module  
a finitely generated right $A$-module. 
If $M$ is an $A$-module,
we write $\rad(M)$ for the Jacobson radical 
of $M$, and $P(M)$ for the projective cover of $M$. 
We say that $n=\LL(M)$ is the Loewy length of $M$,
if $n\geq 0$ is smallest with $\rad^n(M)=\{0\}$.

Moreover, if $G$ is a finite group, 
we write $Z(G)$ for the center of $G$, and $F(G)$ for the 
Fitting subgroup of $G$. 
We let $E(G)$ be the layer of $G$, that is
the central product of the components of $G$, where a component of $G$
is a subnormal quasi-simple subgroup, and
we write $F^*(G)=F(G)E(G)$ for the generalized Fitting subgroup of $G$.
We denote by $k_G$ the trivial $kG$-module,
and by $B_0(G):=B_0(kG)$ the principal block algebra of $kG$.
Given a $kG$-module $M$, we write $M^*:={\mathrm{Hom}}_k(M,k)$ 
for the dual module of $M$, which becomes a {\it right} $kG$-module
with resepct to the contragredient acion, 
and $M$ is called self-dual if $M\cong M^*$ as $kG$-modules.

For other general notation and terminology we refer \cite{NagaoTsushima} 
and \cite{Suzuki1982}, as far as representation theory and finite
group theory are concerned, respectively. Moreover, for the necessary
background on finite groups of Lie type and Auslander-Reiten theory,
we refer to \cite{MalleTesterman} and \cite{ARS}, respectively.
\end{noth}

\section{Groups of Lie type in defining characteristic}\label{lietypegrp}

We collect the facts needed from the theory of finite groups of Lie type
and their representations.

\begin{noth}{\bf Tits's Theorem.}\label{titsthm}
Let $\bfG$ be a simply-connected simple linear algebraic group
over the algebraic closure $\overline{\F}_p$ of the field $\F_p$
with $p$ elements. Let $F\colon\bfG\rightarrow\bfG$ be a
Steinberg endomorphism, see \cite[Ch.21]{MalleTesterman}.
Let $\bfT$ be an $F$-stable maximal torus of $\bfG$, contained
in an $F$-stable Borel subgroup $\bfB$ of $\bfG$, and let 
$\bfU$ be the unipotent radical of $\bfB$.
Let $q$ be the absolute value of the eigenvalues of $F$
for its action on the character group of $\bfT$. 
Then $q$ is an integral power of $p$, except in
the `very-twisted cases' giving rise to the Suzuki and Ree groups,
where it is an half-integral power. 
The associated set of fixed points $\bfG(q):=\bfG^F$ is
called a {\it finite group of Lie type}.

Now Tits's Theorem, see \cite[Theorem 24.17]{MalleTesterman}, 
says that except in the cases
$$ \SL_2(2),\, \SL_2(3),\, \SU_3(2),\, \Sp_4(2),\, G_2(2),\,
{}^2 B_2(\sqrt{2}),\, {}^2 G_2(\sqrt{3}),\, {}^2 F_4(\sqrt{2}) $$
the group $G:=\bfG(q)$ is perfect, implying that $G$ is quasi-simple,
that is $G/Z(G)$ is non-abelian simple; note that $Z(G)$ always is a
$p'$-group. In this case, $G$ is called a {\it quasi-simple}
finite group of Lie type.

The non-solvable groups amongst the above exceptions, that is
$$ \Sp_4(2),\, G_2(2),\, {}^2 G_2(\sqrt{3}),\, {}^2 F_4(\sqrt{2}) ,$$
all turn out to have trivial center, and their derived subgroups
have index $p$ and are non-abelian simple. These groups are
called the {\it almost simple} finite groups of Lie type.
In both the quasi-simple and the almost simple cases the group
$(G/Z(G))'$ is called a {\it simple} group of Lie type; note that 
this encompasses the Tits group ${}^2 F_4(\sqrt{2})'$, which does 
not occur elsewhere in the classification of finite simple groups.
\end{noth}

\begin{Proposition}{\bf Finite and tame cases; 
see also \cite[Section 8.9]{Humphreys2006}.}\label{finitetame}
Let $G$ be a simple or an almost simple finite group of Lie type, 
and let $U\leq G$ be a Sylow $p$-subgroup of $G$. Then the following holds:

(a) If $p$ is arbitrary, then $U$ is cyclic if and only if 
one of the following holds:

$\circ$
$G\cong\SL_2(p)$, in which case $U\cong C_p$ has order $p$;

$\circ$
$G\cong {}^2 G_2(\sqrt{3})'$,
in which case $U\cong C_9$ has order $9$.

(b) If $p=2$, then $U$ is dihedral, semi-dihedral or generalized quaternion 
if and only if one of the following holds:

$\circ$
$G\cong\SL_2(4)$, 
in which case we have $U\cong C_2^2$, the Klein $4$-group;

$\circ$
$G\cong\SL_3(2)$, 
in which case we have $U\cong D_8$, the dihedral group of order $8$;

$\circ$
$G\cong\Sp_4(2)'$,
in which case we have $U\cong D_8$, the dihedral group of order $8$.
\end{Proposition}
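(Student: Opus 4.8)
The plan is to prove this proposition by a case-by-case classification argument, relying on the known structure of Sylow $p$-subgroups of finite groups of Lie type in their defining characteristic. The key structural fact is that, for $G$ a finite group of Lie type defined over a field of characteristic $p$, the group $\bfU$ of $F$-fixed points of the unipotent radical of an $F$-stable Borel subgroup is a Sylow $p$-subgroup of $G$. Thus I would first recall that $U$ may be identified with $\bfU^F$, and that $|U| = q^N$, where $N$ is the number of positive roots of the associated root system (suitably interpreted in the twisted cases). This immediately reduces the cyclicity question in part~(a) to understanding when this unipotent group is cyclic.

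For part~(a), I would argue that $U$ is cyclic only in very restricted circumstances. When the Lie rank is at least $2$, the unipotent radical contains a copy of the additive group associated to each positive root, and these generate a non-abelian or at least non-cyclic $p$-group; so cyclicity forces the rank-$1$ situation. In the untwisted rank-$1$ case we have $G \cong \PSL_2(q)$ (or $\SL_2(q)$ at the level of Lie type groups), where $U$ is elementary abelian of order $q$, hence cyclic if and only if $q = p$, giving $G \cong \SL_2(p)$ with $U \cong C_p$. The only other source of a cyclic Sylow $p$-subgroup is the smallest Ree group ${}^2 G_2(\sqrt{3})'$, where a direct computation (or appeal to the known order formulas) shows $|U| = 27$ but $U \cong C_9$; this is the genuinely exceptional case and must be pinned down explicitly. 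I would verify this last point by exhibiting the precise structure of the Sylow $3$-subgroup of ${}^2 G_2(\sqrt{3})'$, which is the main delicate step of part~(a).

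For part~(b), with $p = 2$, the task is to determine when $U$ is dihedral, semi-dihedral, or generalized quaternion. Since these are exactly the non-cyclic $2$-groups with a cyclic subgroup of index $2$ (equivalently, those with a unique or small number of involutions up to the appropriate constraints), and since $U$ is a Sylow $2$-subgroup arising as a unipotent group, I would again reduce to small rank and small field size. The unipotent group $\bfU^F$ is typically too large and too far from having a cyclic maximal subgroup unless $|U|$ is small, so I would enumerate the finitely many Lie type groups with $|U| \in \{4, 8\}$ (and check a few slightly larger candidates to rule them out), then identify $U$ in each. This yields $\SL_2(4)$ with $U \cong C_2^2$, and $\SL_3(2)$ and $\Sp_4(2)'$ each with $U \cong D_8$.

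The hard part, in both parts, will be the \emph{exceptional} twisted and almost simple cases rather than the generic families: the very-twisted Suzuki and Ree groups have unipotent Sylow subgroups whose isomorphism types are not read off from a naive root-system count, and the almost simple cases from \ref{titsthm} require checking that passing to a derived subgroup of index $p$ (as with $\Sp_4(2)'$) does not change the Sylow $p$-subgroup but does change the group. I expect the cleanest route is to cite the explicit tables of Sylow subgroup orders and structures for groups of Lie type, reducing the whole proposition to a finite verification, with the only genuinely computational inputs being the structure of the Sylow $3$-subgroup of ${}^2 G_2(\sqrt{3})'$ and the Sylow $2$-subgroups of the three small groups in part~(b).
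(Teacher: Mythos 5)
Your overall skeleton (reduce to the unipotent group $\bfU^F$, split by twisted Lie rank, verify exceptional cases explicitly) matches the paper's, but the execution has genuine gaps, most seriously in part (b). You propose to ``enumerate the finitely many Lie type groups with $|U|\in\{4,8\}$ (and check a few slightly larger candidates to rule them out)'', but nothing in your argument bounds $|U|$: dihedral, semi-dihedral and generalized quaternion groups occur in every sufficiently large $2$-power order, so a priori a large unipotent group such as a Sylow $2$-subgroup of $\SU_3(q)$ or ${}^2 B_2(q)$ could have one of these shapes, and an infinite family can never be eliminated by sampling finitely many members. Your stated justification (``typically too large and too far from having a cyclic maximal subgroup'') is not a checkable criterion, and the characterization you lean on --- that dihedral/semi-dihedral/quaternion groups are exactly the non-cyclic $2$-groups with a cyclic subgroup of index $2$ --- is false: $C_{2^{n-1}}\times C_2$ and the modular groups of order $2^n$ also have cyclic index-$2$ subgroups. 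The paper instead extracts a usable invariant from \cite[Satz I.14.9(b)]{Huppert}: each of the three admissible shapes has $U/U'\cong C_2^2$; the Chevalley commutator formula then forces the twisted Lie rank to be at most $2$, and a type-by-type computation of the fundamental root subgroups, valid for all $q$ simultaneously, leaves only $\SL_2(4)$ and $\SL_3(2)$ among the quasi-simple groups.

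Part (a) has two further gaps. First, after disposing of rank $\geq 2$ and the untwisted rank-$1$ case you jump directly to ``the only other source of a cyclic Sylow $p$-subgroup is the smallest Ree group'', silently skipping the twisted rank-$1$ families $\SU_3(q)$, ${}^2 B_2(q)$ and ${}^2 G_2(q)$ for general $q$; these are infinite families, so your proposed ``finite verification'' from tables cannot settle them, whereas the paper rules them out by the structural fact that $U/U'\cong\F_{q^2}^+$ is not cyclic. Second, your description of the exceptional case is self-contradictory: you write ``$|U|=27$ but $U\cong C_9$''. In fact the order-$27$ extraspecial Sylow $3$-subgroup belongs to the almost simple group ${}^2 G_2(\sqrt{3})$, while its derived subgroup ${}^2 G_2(\sqrt{3})'\cong\PSL_2(8)$ has Sylow $3$-subgroup $C_9$ of order $9$. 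This distinction between an almost simple group and its index-$p$ derived subgroup is precisely what the proposition turns on (it likewise produces $\Sp_4(2)'$ versus $\Sp_4(2)$ in part (b)), and the paper handles all the almost simple groups and their derived subgroups separately, by direct identification of their Sylow subgroups, in \ref{lost}; the proof of the proposition itself treats only the quasi-simple groups.
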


\begin{proof}[\bf Proof]
Although there is a thorough discussion of the structure of $U$ in 
\cite[Section 3.3]{GorLyoSol}, we choose a direct approach, 
tailored for our purposes. The almost simple groups and their
derived subgroups being discussed in \ref{lost} below,
we may assume that $G=\bfG^F$ is quasi-simple, and
that $U:=\bfU^F$, by \cite[Corollary 24.11]{MalleTesterman}.

(a) 
Assume that $U$ is cyclic.
Then it follows from \cite[Proposition 23.7, Corollary 23.9]{MalleTesterman}
that $G$ has twisted Lie rank $1$, thus $U$ is a root subgroup.
For the structure of the root subgroups occurring we refer to 
\cite[Example 23.10]{MalleTesterman} and 
\cite[Proposition 13.6.3, Proposition 13.6.4]{Carter}.
We now consider the various types:

For type $A_1$ we have $G\cong\SL_2(q)$ and $U\cong\F_q^+$,
hence $U$ is cyclic if and only if $q=p$.
For type ${}^2 A_2$ we have $G\cong\SU_3(q)$ and 
$U/U'\cong\F_{q^2}^+$, hence $U$ is not cyclic.
For types ${}^2 B_2$ and ${}^2 G_2$ we have $q=\sqrt{2}^{2f+1}$
and $q=\sqrt{3}^{2f+1}$, respectively, for some $f\geq 1$, and 
$U/U'\cong\F_{q^2}^+$, hence $U$ is not cyclic either.

(b) 
Let $p=2$, and assume that $U$ is
dihedral, semi-dihedral or generalized quaternion.
Then, by \cite[Satz I.14.9(b)]{Huppert}, we have $U/U'\cong C_p^2$. 
Thus the Chevalley commutator formula, 
see \cite[Proposition 23.11]{MalleTesterman}, shows that $G$ has
has twisted Lie rank at most $2$. We first 
consider the various types of Lie rank $1$:

For type $A_1$ the subgroup $U\cong\F_q^+$ 
has the desired shape if and only if $q=4$.
For type ${}^2 A_2$ we have $U/U'\cong\F_{q^2}^+$, where $q>2$,
hence $U$ does not have the desired shape.
%
For type ${}^2 B_2$ we have $U/U'\cong\F_{q^2}^+$, where
$q^2=2^{2f+1}>2$, hence $U$ does not have the desired shape either.

If $G$ has twisted Lie rank $2$,
then $U/U'$ has a quotient isomorphic to 
$(U_\alpha/U'_\alpha)\times(U_\beta/U'_\beta)$,
where $\alpha,\beta$ denote the fundamental roots, and we assume
$\alpha$ to be the long one if there are two distinct root lengths.
Hence we have $U_\alpha/U'_\alpha\cong C_p\cong U_\beta/U'_\beta$,
implying that $U_\alpha\cong C_p\cong U_\beta$.
We again consider the various types:

For type $A_2$ we have $G\cong\SL_3(q)$ and 
$U_\alpha\cong\F_q^+\cong U_\beta$, hence $q=2$, in which case from
$U'=Z(U)=U_{\alpha+\beta}\cong\F_q^+$ we get $U\cong D_8$.
For types $B_2$ and $G_2$, where $B_2(q)\cong\Sp_4(q)$,
we have $U_\alpha\cong\F_q^+\cong U_\beta$, where $q>2$,
hence $U$ does not have the desired shape. 
For type ${}^2 A_3$ we have $G\cong\SU_4(q)$, 
where $U_\alpha\cong\F_q^+$ and $U_\beta\cong\F_{q^2}^+$.
For type ${}^2 A_4$ we have $G\cong\SU_5(q)$,  
where $U_\alpha\cong\F_{q^2}^+$ and $U_\beta/U'_\beta\cong\F_{q^2}^+$.
%
For type ${}^3 D_4$ we have $U_\alpha\cong\F_q^+$ and
$U_\beta\cong\F_{q^3}^+$.
For type ${}^2 F_4$ we have $q=\sqrt{2}^{2f+1}$, for some $f\geq 1$, 
and $U_\alpha\cong\F_{q^2}^+$ and $U_\beta/U'_\beta\cong\F_{q^2}^+$.
\end{proof}

\begin{noth}{\bf Blocks.}\label{blocks}
Given a finite group $G$ of Lie type, 
then the $p$-blocks of $kG$ are well-understood; see for example 
\cite[Chapter 8]{Humphreys2006} or \cite[Corollary]{DipperII}:
There is a unique $p$-block of defect $0$, 
its ordinary character being the Steinberg character. 
All other $p$-blocks have maximal defect, and if $k$ contains 
$|Z(G)|$-th primitive roots of unity, there are precisely $|Z(G)|$ of them.
In particular, the principal $p$-block is the only block of positive
defect of a simple or an almost simple finite group of Lie type.

Moreover, excluding the cases explicitly mentioned in Proposition
\ref{finitetame}, it follows from the comments in
\ref{AR} below that any block algebra of $kG$ of positive
defect has wild representation type.
\end{noth}

\begin{noth}{\bf Auslander-Reiten theory.}\label{AR}
We recall the necessary facts from Auslander-Reiten theory. 
Let $G$ be any finite group, and let $B$ be a 
$p$-block algebra of $kG$, where $k$ is assumed to be algebraically closed.
Then it is well-known, see for example \cite[Theorem 4.4.4]{BensonI},
that $B$ has wild representation type if and only if the defect groups
of $B$ are neither cyclic, dihedral, semi-dihedral nor generalized quaternion.

In this case, by \cite[Theorem 1]{Erdmann}, any connected component 
of the stable Auslander-Reiten quiver of $B$ has tree class $A_\infty$.
Hence it makes sense to discuss whether or not 
a non-projective indecomposable $B$-module $M$ lies at the end of
its connected component in the stable Auslander-Reiten quiver; 
see for example \cite[Section 2.2]{Jost}
or \cite[Introduction]{Kawata1997}. 
As the Heller operator $\Omega$
induces an automorphism of the stable Auslander-Reiten quiver of $B$,
the module $M$ lies at the end of 
its connected component in the stable Auslander-Reiten quiver,
if and only if any and hence all of its Heller translates $\Omega^i(M)$,
for $i\in\Z$, have this property.

In particular, if $S$ is a simple $B$-module, then the Auslander-Reiten
sequence ending in $\Omega^{-1}(S)$ is the standard short exact sequence
$$ \{0\}\rightarrow\Omega(S)\rightarrow
P(S)\oplus\CH(S)\rightarrow\Omega^{-1}(S)\rightarrow\{0\} ,$$
where $\CH(S):=\rad(P(S))/\soc(P(S))$ is the heart of $P(S)$.
Hence $S$ lies at the end of its connected component in the
stable Auslander-Reiten quiver, if and only if $\CH(S)$ is indecomposable.

Another sufficient condition to ensure that $S$ lies at the end of
its connected component in the stable Auslander-Reiten quiver
is given in the following Proposition \ref{KMUprop}. 
Actually, it is the key ingredient in the proof of 
Theorem \ref{KawataMichlerUno}, and its immediate 
Corollary \ref{KMUcor} is used in \ref{lost} below:
\end{noth}

\begin{Proposition}
{\bf \cite[Proposition 2.1]{KawataMichlerUno2001}.}
\label{KMUprop}
Let $G$ be a finite group, and let $U\leq G$ be a Sylow $p$-subgroup,
which is neither cyclic, dihedral, semi-dihedral nor generalized quaternion.
Let $S$ be a simple $kG$-module, such that $U$ is a vertex of $S$, and
$$ \dim_k(\Hom_{kU}(k_U,S{\downarrow}_U))=1
=\dim_k(\Hom_{kU}(S{\downarrow}_U,k_U)) .$$
Then $S$ lies at the end of its connected component in the stable
Auslander-Reiten quiver of $kG$.
\end{Proposition}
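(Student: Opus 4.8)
The plan is to follow the method of \cite{KawataMichlerUno2001}, first turning the two $\Hom$-conditions into a structural statement about the restriction $S{\downarrow}_U$. Since $U$ is a $p$-group, its only simple module is $k_U$, so for every $kU$-module $M$ one has $\dim_k\Hom_{kU}(k_U,M)=\dim_k\soc(M)$ and $\dim_k\Hom_{kU}(M,k_U)=\dim_k(M/\rad(M))$. Thus the hypotheses say precisely that $S{\downarrow}_U$ has simple socle and simple top, both isomorphic to $k_U$. A module with simple socle is indecomposable, so $S{\downarrow}_U$ is indecomposable; and since $U$ is a vertex of $S$, the module $S$ is relatively $U$-projective, whence $S\mid(S{\downarrow}_U){\uparrow}^G$ and the (necessarily indecomposable) source of $S$ is forced to be $S{\downarrow}_U$ itself. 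This indecomposability of the restriction, together with the multiplicity one of $k_U$ in its socle and in its top, is the structural input I would exploit.

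Next I would reduce the assertion to an indecomposability statement. By the discussion in \ref{AR}, the non-tameness of $U$ makes the block of $S$ wild, so every stable Auslander--Reiten component has tree class $A_\infty$, and $S$ lies at the end of its component if and only if the heart $\CH(S)$ is indecomposable. Hence the goal becomes to show that $\CH(S)$ does not split as a direct sum of two nonzero $kG$-modules, equivalently --- since $\Omega$ acts as an automorphism of the stable quiver --- that a single arrow ends at $S$.

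For the main step I would restrict the almost split sequence $\{0\}\to\Omega(S)\to P(S)\oplus\CH(S)\to\Omega^{-1}(S)\to\{0\}$ of \ref{AR} to $U$. Because $\Omega^{-1}(S)$ is again relatively $U$-projective, the restricted sequence is non-split over $kU$; and as $P(S){\downarrow}_U$ is free while $\Omega^{\pm1}(S){\downarrow}_U\cong\Omega_U^{\pm1}(S{\downarrow}_U)$ modulo projectives, it is a non-split extension of the indecomposable module $\Omega_U^{-1}(S{\downarrow}_U)$ by $\Omega_U(S{\downarrow}_U)$ whose middle term is $\CH(S){\downarrow}_U$ modulo projective summands. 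I would then compare it, via the terminal property of almost split sequences, with the genuine $kU$-almost split sequence ending in $\Omega_U^{-1}(S{\downarrow}_U)$, and feed in Frobenius reciprocity, which rewrites the hypotheses as $\dim_k\Hom_{kG}(k_U{\uparrow}^G,S)=1=\dim_k\Hom_{kG}(S,k_U{\uparrow}^G)$ for the self-dual permutation module $k_U{\uparrow}^G$. The aim is to show that a decomposition $\CH(S)=A\oplus B$ into nonzero $kG$-summands would produce a second, linearly independent homomorphism relating $S$ and $k_U{\uparrow}^G$, contradicting multiplicity one, so that a single arrow indeed ends at $S$.

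I expect the main obstacle to be exactly this last implication. A splitting of $\CH(S)$ over $kG$ need not restrict to a splitting of $\CH(S){\downarrow}_U$ that is compatible with the $kU$-almost split sequence of the source, so the passage from ``$S{\downarrow}_U$ is indecomposable with $k_U$ occurring once in its socle and once in its top'' to ``a single arrow ends at $S$'' demands careful control of vertices, sources and the mesh of the component --- the case-by-case bookkeeping carried out in \cite{KawataMichlerUno2001}. The non-tameness hypothesis is indispensable here, since it is what forces tree class $A_\infty$ and excludes the tubes and $\Z A_\infty^\infty$ components in which such numerical control breaks down. An alternative route, removing part of the difficulty, would be to pass through Green correspondence between $kG$ and $kN_G(U)$, where $U$ is a normal Sylow $p$-subgroup and the Auslander--Reiten picture is more transparent; the technical heart then becomes the verification that this correspondence preserves the end-of-component property.
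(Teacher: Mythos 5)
Your proposal is correct as far as the paper's own proof goes, and it takes essentially the same route: the paper, too, merely observes that $kU$ has wild representation type and that the second Hom-condition forces $S{\downarrow}_U$ to be indecomposable (hence equal to the source), and then cites the argument of Kawata--M\"uller--Uno, more precisely \cite[Proposition 1.3 and Section 3]{KawataMichlerUno2001}, for the core implication --- exactly the point at which you defer to them. One small correction: that cited core is a general Auslander--Reiten-theoretic argument of the kind you sketch (restricting the almost split sequence to the vertex and comparing it with the source's sequence), not case-by-case bookkeeping; the case analysis in \cite{KawataMichlerUno2001} concerns only the verification of the hypotheses for the various families of groups of Lie type.
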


\begin{proof}[\bf Proof]
By assumption, $kU$ has wild representation type.
Moreover, the second condition on $S$ implies that $S{\downarrow}_U$ is
indecomposable. 
Now the assertion follows from the argument in 
\cite[Section 3]{KawataMichlerUno2001} in conjunction with
\cite[Proposition 1.3]{KawataMichlerUno2001}.
\end{proof}

\begin{Corollary}\label{KMUcor}
Keeping the assumptions on $G$ and $U$, let $S$ be a simple $kG$-module.

(a)
Let $\Phi(U)$ be the Frattini subgroup of $U$. Suppose that  
$S{\downarrow}_{\Phi(U)}$ is indecomposable, and that
$$ \dim_k(\Hom_{kU}(k_U,S{\downarrow}_U))=1
=\dim_k(\Hom_{kU}(S{\downarrow}_U,k_U)) .$$
Then $S$ lies at the end of its connected component in the stable
Auslander-Reiten quiver of $kG$.

(b)
Let $H$ be a finite group containing $G$ as a normal subgroup,
such that $S$ extends to a (simple) $kH$-module $T$. Moreover, assume 
that $S$ fulfills the conditions in (a). Then $T$ lies at the end of
its connected component in the stable Auslander-Reiten quiver of $kH$.
\end{Corollary}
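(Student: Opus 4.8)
The plan is to derive both parts from Proposition \ref{KMUprop} by supplying the one hypothesis of that Proposition which is not directly assumed, namely that the ambient Sylow $p$-subgroup is a vertex of the module in question. The combinatorial heart of the matter is the following claim about an arbitrary finite $p$-group $U$ and an indecomposable $kU$-module $X$: \emph{if $X{\downarrow}_{\Phi(U)}$ is indecomposable, then $U$ is a vertex of $X$.} Granting this claim, part (a) follows almost at once, and part (b) will be reduced to part (a) applied over $H$.

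To prove the claim I argue by contraposition, and this step I expect to be the main obstacle. Suppose a vertex $V$ of $X$ is a proper subgroup of $U$. Then $V$ lies in a maximal subgroup $M$ of $U$, which is normal of index $p$ and satisfies $\Phi(U)\leq M$. Since $X$ is relatively $M$-projective, decomposing $X{\downarrow}_M$ into indecomposables and using Krull--Schmidt shows that $X$ is a direct summand of $W{\uparrow}^U$ for some indecomposable $kM$-module $W$; as $k$ is algebraically closed and $M\trianglelefteq U$ with $U/M$ a $p$-group, Green's indecomposability theorem forces $X\cong W{\uparrow}^U$. Applying the Mackey formula to $(W{\uparrow}^U){\downarrow}_{\Phi(U)}$ and using $\Phi(U)\leq M\trianglelefteq U$, the double cosets in $M\backslash U/\Phi(U)$ collapse to the $p$ cosets of $M\backslash U$, yielding
$$ X{\downarrow}_{\Phi(U)}\cong\bigoplus_{g\in M\backslash U}{}^{g}\bigl(W{\downarrow}_{\Phi(U)}\bigr) ,$$
a direct sum of $p\geq 2$ nonzero summands. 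This contradicts the indecomposability of $X{\downarrow}_{\Phi(U)}$ and proves the claim. The essential inputs are precisely Green's indecomposability theorem (available since $k$ is algebraically closed) and the normality $\Phi(U)\leq M\trianglelefteq U$.

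For part (a), each of the two Hom conditions forces $S{\downarrow}_U$ to be indecomposable: over the $p$-group $U$ every nonzero module contains $k_U$ in its socle, so $\dim_k\Hom_{kU}(k_U,S{\downarrow}_U)=1$ already precludes a nontrivial direct decomposition. Taking $X:=S{\downarrow}_U$ in the claim, whose restriction to $\Phi(U)$ is the indecomposable module $S{\downarrow}_{\Phi(U)}$, shows that $U$ is a vertex of $S{\downarrow}_U$. Since $S$ is relatively $U$-projective and $S{\downarrow}_U$ is indecomposable, a vertex of $S$ coincides with a vertex of $S{\downarrow}_U$, so $U$ is a vertex of $S$. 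Now Proposition \ref{KMUprop} applies verbatim and yields that $S$ lies at the end of its connected component.

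For part (b), I will verify that $T$ itself satisfies the hypotheses of part (a) relative to $H$ and a Sylow $p$-subgroup $U_H$ of $H$ containing $U$, and then invoke (a). Since $G\trianglelefteq H$ one has $U=U_H\cap G\trianglelefteq U_H$, and $U_H\supseteq U$ is again neither cyclic, dihedral, semi-dihedral nor generalized quaternion, as every subgroup of such a group is itself of one of these restricted types. The Hom conditions transfer as follows: from $T{\downarrow}_G=S$ we get $T{\downarrow}_U=S{\downarrow}_U$, so $T^{U_H}\subseteq T^U=S^U$ is at most one-dimensional; as $U\trianglelefteq U_H$, the quotient $U_H/U$ acts on the line $S^U$ through a character into $k^\times$, which is trivial because $U_H/U$ is a $p$-group and $\mathrm{char}(k)=p$. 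Hence $\dim_k\Hom_{kU_H}(k_{U_H},T{\downarrow}_{U_H})=\dim_k T^{U_H}=1$, and the same computation applied to $T^*{\downarrow}_G=S^*$ gives the second Hom condition. Finally, the identity $\Phi(U)=U^p[U,U]\leq U_H^p[U_H,U_H]=\Phi(U_H)$ for the $p$-groups $U\leq U_H$ shows that the indecomposable module $T{\downarrow}_{\Phi(U)}=S{\downarrow}_{\Phi(U)}$ has indecomposable restriction already from the larger subgroup $\Phi(U_H)$, that is $T{\downarrow}_{\Phi(U_H)}$ is indecomposable. Thus $T$ meets all hypotheses of part (a) over $H$, and (a) gives the conclusion for $T$.
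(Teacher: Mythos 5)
Your proposal is correct and follows essentially the same route as the paper's own proof: in (a) you use Green's indecomposability theorem together with the indecomposability of the restriction to $\Phi(U)$ to show that $U$ is a vertex of $S$, and then invoke Proposition \ref{KMUprop}; in (b) you transfer the hypotheses of (a) to the triple $(H,U_H,T)$ and apply (a). The only deviations are cosmetic: you reach the contradiction via a Mackey decomposition of $X{\downarrow}_{\Phi(U)}$ into $p$ summands rather than the paper's dimension count for $S{\downarrow}_U\cong(S{\downarrow}_X){\uparrow}^U$, and you obtain $\Phi(U)\leq\Phi(U_H)$ from the Burnside basis formula $\Phi(P)=P^p[P,P]$ instead of the paper's citation of Huppert for normal subgroups (while commendably making explicit the subgroup-closure argument showing $U_H$ is again of none of the excluded isomorphism types, which the paper leaves implicit).
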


\begin{proof}[\bf Proof]
(a)
It follows from the second condition that $S{\downarrow}_U$ is
indecomposable. Assume there is a maximal subgroup $X<U$ such that
$S{\downarrow}_U$ is relatively $X$-projective. Then
$S{\downarrow}_U$ is a direct summand of $(S{\downarrow}_X){\uparrow}^U$.
From $\Phi(U)<X$ we get that $S{\downarrow}_X$ is indecomposable, 
hence by Green's indecomposability theorem we infer that 
$(S{\downarrow}_X){\uparrow}^U$ is indecomposable.
Thus we have $S{\downarrow}_U \cong (S{\downarrow}_X){\uparrow}^U$
as $kU$-modules, a contradiction.
Hence $U$ is a vertex of $S$, and the assertion follows 
from Proposition \ref{KMUprop}.

(b)
Let $V$ be a Sylow $p$-subgroup of $H$, where we may assume
that $U=G\cap V$. Then we have $U\unlhd V$, and thus 
by \cite[Hilfssatz III.3.3(b)]{Huppert} we get $\Phi(U)\unlhd\Phi(V)$,
implying that $T{\downarrow}_{\Phi(V)}$ is indecomposable.
Moreover, since  
$\Hom_{kV}(k_V,T{\downarrow}_V)\neq\{0\}\neq\Hom_{kV}(T{\downarrow}_V,k_V)$
anyway, from the assumptions on $S$ we infer that 
$$ \dim_k(\Hom_{kV}(k_V,T{\downarrow}_V))=1
=\dim_k(\Hom_{kV}(T{\downarrow}_V,k_V)) .$$
Hence the assertion follows from (a), applied to the $kH$-module $T$.
\end{proof}

The key to prove our main theorem will be the following result:

\begin{Theorem}
{\bf Kawata--Michler--Uno \cite[Theorem]{KawataMichlerUno2001}.} 
\label{KawataMichlerUno}
Let $G$ be a quasi-simple, an almost-simple or a simple finite group 
of Lie type (in the sense of \ref{titsthm}), defined over a finite 
field of characteristic $p$, such that its Sylow $p$-subgroups are
neither cyclic, dihedral, semi-dihedral nor generalized quaternion, that is
$$ G\not\in\{\SL_2(p),\SL_2(4),\SL_3(2),\Sp_4(2)',{}^2 G_2(\sqrt{3})'\} .$$
Then any non-projective simple $kG$-module, where $k$ is algebraically closed,
lies at the end of its connected component in the stable
Auslander-Reiten quiver.
\end{Theorem}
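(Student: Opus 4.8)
The plan is to reduce the statement to the sufficient condition supplied by
Corollary \ref{KMUcor}(a), and then to verify its two hypotheses---indecomposability
of $S{\downarrow}_{\Phi(U)}$ and the two $\Hom$-dimensions being equal to $1$---for
every non-projective simple $kG$-module $S$. Since the single block of defect $0$
contributes only the Steinberg module (which is projective, hence excluded), every
non-projective simple module lies in the principal block and has full vertex $U=\bfU^F$
by the theory recalled in \ref{blocks}. Thus the whole problem is to show that each
such $S$ satisfies the local conditions of Corollary \ref{KMUcor}(a) with respect to
the Sylow $p$-subgroup $U$. I would first dispose of the quasi-simple case $G=\bfG^F$,
and treat the almost-simple and simple variants afterwards by the extension/restriction
machinery of \ref{lost} together with Corollary \ref{KMUcor}(b).

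For the quasi-simple case the natural tool is the defining-characteristic representation
theory coming from algebraic groups. Writing the simple $kG$-modules as the restrictions
to $\bfG^F$ of the restricted irreducible $\bfG$-modules $L(\lambda)$ with $\lambda$ a
$q$-restricted highest weight, each $L(\lambda)$ has a highest weight space that is
one-dimensional and is fixed (up to scalar) by $\bfU$, and dually the lowest weight space
is fixed by the opposite unipotent group. The plan is to exploit this to compute
$\dim_k \Hom_{kU}(k_U, S{\downarrow}_U)$ and $\dim_k \Hom_{kU}(S{\downarrow}_U, k_U)$:
the first counts $U$-fixed points in $S$, the second counts $U$-fixed points in $S^*$.
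The key input is that for a module with $q$-restricted highest weight the space of
$\bfU^F$-invariants is exactly the highest weight space, so both dimensions are $1$; this
is essentially the statement that the restriction to a Sylow $p$-subgroup has a simple
socle and a simple head, which is where results on weight multiplicities and on the
structure of $L(\lambda){\downarrow}_U$ enter. I would cite the relevant vanishing/uniqueness
statements for unipotent fixed points rather than reprove them.

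The remaining hypothesis, indecomposability of $S{\downarrow}_{\Phi(U)}$, is where I
expect the main obstacle to lie, and it is presumably the reason the excluded list is
exactly the groups whose Sylow $p$-subgroups have $p$-rank $1$ or are of dihedral,
semi-dihedral or generalized quaternion type (Proposition \ref{finitetame}). For the
generic groups one wants $\Phi(U)=U'$ to be large enough that the restriction to it
stays indecomposable; concretely, one checks that $S{\downarrow}_{\Phi(U)}$ already has
a one-dimensional socle (or head), forcing indecomposability. The delicate point is that
$\Phi(U)$ can be considerably smaller than $U$, so the uniqueness of fixed points must
be pushed down from $U$ to $\Phi(U)$, and this is exactly the case-by-case analysis the
introduction warns about: one must verify, type by type, that the top composition factor
of the unipotent action does not split off over the Frattini subgroup. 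I would organize
this as an inspection of the long-root and short-root subgroup structure recorded in the
proof of Proposition \ref{finitetame}, and then invoke Corollary \ref{KMUcor}(a) to
conclude.

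For the passage from the quasi-simple group $\bfG^F$ to its simple quotient
$(\bfG^F/Z(\bfG^F))'$ and to the almost-simple groups, I would use that $Z(\bfG^F)$ is a
$p'$-group, so that inflation and deflation along the central quotient preserve the
relevant $p$-local data, and that the almost-simple groups sit between a simple group and
its automorphic extension with index prime to $p$ removed by passing to $O^{p'}$; here
Corollary \ref{KMUcor}(b) lets one transport the end-of-component property from the simple
module $S$ over $G$ to its extension $T$ over the containing group $H$, precisely because
$U\unlhd V$ for the Sylow subgroups and $\Phi(U)\unlhd\Phi(V)$. Assembling these pieces,
every non-projective simple module over any group in the allowed list lies at the end of
its connected component, which is the assertion of Theorem \ref{KawataMichlerUno}.
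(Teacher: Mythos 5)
Your overall architecture (local conditions at a Sylow subgroup, then transport to overgroups) has the right shape, but the engine you propose for the quasi-simple case does not work: it is simply false that $S{\downarrow}_{\Phi(U)}$ is indecomposable for the non-projective simple modules of a general quasi-simple group of Lie type, so Corollary \ref{KMUcor}(a) cannot carry the generic case. Concretely, $G=\SL_2(9)$ with $p=3$ is within the scope of the theorem (its Sylow $3$-subgroup is $C_3^2$), yet $\Phi(U)=\{1\}$, so $S{\downarrow}_{\Phi(U)}$ is decomposable for every simple module of dimension greater than $1$; likewise for $G=\SL_3(3)$ one has $\Phi(U)=U'\cong C_3$, and already the natural $3$-dimensional module (let alone the $7$-dimensional one) decomposes on restriction to it, since a root element of the highest root acts with a fixed vector. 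What the generic case really needs is the vertex hypothesis of Proposition \ref{KMUprop}, namely that every non-projective simple module has the full Sylow subgroup $U$ as a vertex; this is a deep theorem of Dipper \cite{DipperI,DipperII}, not something extractable from the root-subgroup structure by a Frattini argument. This is exactly the paper's route: for quasi-simple $G$ it invokes the argument of \cite[Section 3]{KawataMichlerUno2001}, which verifies the hypotheses of Proposition \ref{KMUprop} (vertices and the two $\Hom$-dimension conditions) from Dipper's results, while Corollary \ref{KMUcor} is reserved for the two groups ${}^2 F_4(\sqrt{2})'$ and ${}^2 F_4(\sqrt{2})$, where the Frattini condition happens to hold (Sylow subgroups of nilpotency class $8$) and the projective covers are computationally out of reach. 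Note also that your parenthetical justification of full vertices "by the theory recalled in \ref{blocks}" is not legitimate: maximal defect of a block says nothing about the vertices of the simple modules in it (and for quasi-simple groups with nontrivial center, non-projective simple modules need not lie in the principal block at all); if you did have full vertices for free, you could apply Proposition \ref{KMUprop} directly and the Frattini detour would be pointless.

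There is a second, smaller gap in your treatment of the almost simple groups. Transporting the end-of-component property from $G'$ to $G$ via Corollary \ref{KMUcor}(b) presupposes that the simple $kG'$-modules satisfy the conditions of \ref{KMUcor}(a); but for $G=\Sp_4(2)$ the derived subgroup $\Sp_4(2)'\cong{\mathfrak A}_6$ has dihedral Sylow $2$-subgroups, and for $G={}^2 G_2(\sqrt{3})$ the derived subgroup ${}^2 G_2(3)'$ has cyclic Sylow $3$-subgroups, so in both cases the hypotheses (and indeed the conclusion of the theorem itself) fail for $G'$ --- which is precisely why these derived subgroups are on the excluded list. Hence these two almost simple groups cannot be reached from below; the paper instead handles them, together with $G_2(2)$ and $G_2(2)'$, by explicit computation of the projective indecomposable modules with {\sf GAP} and the {\sf C-MeatAxe}, using the equivalence from \ref{AR} between indecomposability of the heart $\CH(S)$ and $S$ lying at the end of its component in the stable Auslander-Reiten quiver.
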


\begin{proof}[\bf Proof]
Some care in applying in \cite[Section 3]{KawataMichlerUno2001}
has to be excercised: The strategy of proof is to ensure that
the conditions listed in Proposition \ref{KMUprop} are fulfilled. 
These in turn follow from the results in \cite{DipperI,DipperII},
including their proofs.
Now checking the assumptions made there it turns out that the 
admissible groups are precisely the quasi-simple groups
(in the sense of \ref{titsthm}). Hence this covers the quasi-simple  cases,
and it remains to consider the almost simple groups and their
derived subgroups, which is done in \ref{lost} below.
\end{proof}


\begin{noth}{\bf The lost cases.}\label{lost}
We discuss the cases excluded in Theorem \ref{Lie} and
the Kawata--Michler--Uno Theorem \ref{KawataMichlerUno},
and show that indeed the statements do {\it not} hold in these cases.
Moreover, we complete the proof of Theorem \ref{KawataMichlerUno} by
considering 
the relevant almost simple groups and their derived subgroups.

In order to proceed, we assume $k$ to be algebraically closed.
Given a finite group $G$,
by \cite[Theorem E]{Webb1982} the heart $\CH(k_G)$ of $P(k_G)$ is 
decomposable if and only if $p=2$ and the Sylow $p$-subgroups of $G$
are dihedral, including the Klein $4$-group.
In this case, there are three simple $kG$-modules belonging
to the principal $2$-block, and the structure of the associated projective
indecomposable modules has been determined in
\cite[Theorems 2 and 4]{Erdmann1977}. 
If the Sylow $p$-subgroups of $G$ are cyclic, then the theory of blocks 
cyclic defect applies, 
and the projective indecomposable $kG$-modules are described in terms of the 
Brauer trees of $kG$; see \cite{Janusz} or \cite{Kupisch}.

In contrast to this bright picture, for block algebras of wild
representation type general theory does not provide too much insight.
But the projective indecomposable modules of interest here are 
straightforwardly constructed explicitly and analysed, 
using the computer algebra systems {\sf GAP} \cite{GAP} and
{\sf C-MeatAxe} \cite{MA}, and the databases compiled in the framework
of the {\sf ModularAtlas} project \cite{ModularAtlas}.
The only exceptions are the group ${}^2 F_4(\sqrt{2})$ and its derived
subroup, the Tits group ${}^2 F_4(\sqrt{2})'$, where the projective
indecomposable modules are not easily computationally tractable due
to sheer size. Here we instead apply Corollary \ref{KMUcor} to the 
simple modules in question. Note that restriction induces a bijection
between the simple modules in the principal $p$-blocks of an almost
simple group and its derived subgroup, respectively.
We now consider the various cases:

$\circ$ 
For $G=\SL_2(p)$, where $p\geq 5$, the Sylow $p$-subgroups are cyclic
of order $p$. There are $\frac{p-1}{2}$ simple modules in each of the 
two $p$-blocks of positive defect. Their Brauer trees are straight 
lines having the exceptional vertex of multiplicity $2$ at the end, 
see \cite[Section 16.9]{Humphreys2006}.
Thus there are two simple modules, amongst them $k_G$,
such that the heart of the associated projective cover is indecomposable,
and $p-3$ simple modules not having this property. Moreover, we have
$\LL(P(k_G))=3$. 

$\circ$ 
For $G= {}^2 G_2(\sqrt{3})\cong\PSL_2(8){\colon}3$ the 
Sylow $3$-subgroups are isomorphic to the extra-special group $3_-^{1+2}$,
those of $G'= {}^2 G_2(3)'\cong\SL_2(8)\cong\PSL_2(8)$ 
are isomorphic to $C_9$. There are two simple modules $\{k_G,7\}$ 
in the principal $3$-block of $G'$.
The assocoated Brauer tree is a straight 
line having the exceptional vertex of multiplicity $4$ at the end. 
Thus $\CH(k_{G'})$ is indecomposable and $\LL(P(k_{G'}))=3$, while 
$\CH(7)$ is decomposable and $\LL(P(7))=5$.
The principal $3$-block of $G$ has wild representation type,
and $\CH(S)$ turns out to be indecomposable for both its simple modules 
$S\in\{k_G,7\}$, where $\LL(P(k_G))=5$ and $\LL(P(7))=7$. 

$\circ$ 
For $G=\SL_2(4)=\PSL_2(4)\cong\PSL_2(5)$ 
and for $G=\SL_3(2)=\PSL_3(2)\cong\PSL_2(7)$ 
the Sylow $2$-subgroups are dihedral, so that $\CH(k_G)$ is decomposable.



$\circ$ 
For $G=\Sp_4(2)\cong{\mathfrak S}_6$ 
the Sylow $2$-subgroups are isomorphic to $D_8\times C_2$, 
those of $G'=\Sp_4(2)'\cong{\mathfrak A}_6\cong\PSL_2(9)$ 
are isomorphic to $D_8$. Hence $P(k_{G'})$ is decomposable.
The principal $2$-block of $G$ has wild representation type, 
and for all its simple modules $S\in\{k_G,4,4'\}$
the heart $\CH(S)$ turns out to be indecomposable, where $\LL(P(S))=10$.  

$\circ$ 
For $G=G_2(2)\cong\PSU_3(3){\colon}2$ 
the Sylow $2$-subgroups have order $64$, 
those of $G'\cong G_2(2)'\cong \PSU_3(3)\cong\SU_3(3)$ 
have order $32$. In both cases they have nilpotency class $3$,
thus by \cite[Satz III.11.9(b)]{Huppert} they are neither 
cyclic, dihedral, semi-dihedral nor generalized quaternion. 
Hence both the principal $2$-blocks of $G'$ and $G$ 
have wild representation type.
For all the simple $kG'$-modules $S\in\{k_{G'},6,14\}$  
the heart $\CH(S)$ turns out to be indecomposable, where $\LL(P(S))=19$, 
and similarly for all the simple $kG$-modules $S\in\{k_G,6,14\}$  
the heart $\CH(S)$ turns out to be indecomposable, where $\LL(P(S))=20$.

$\circ$ 
For $G\cong {}^2 F_4(\sqrt{2})$
the Sylow $2$-subgroups have order $4096$,
those of ${}^2 F_4(\sqrt{2})'$ have order $2048$.
In both cases they have nilpotency class $8$, thus they are 
neither cyclic, dihedral, semi-dihedral nor generalized quaternion. 
Hence both the principal $2$-blocks of $G'$ and $G$ 
have wild representation type. Due to sheer size, in these cases
the projective indecomposable modules are not easily computationally 
tractable, so that here we revert to Corollary \ref{KMUcor} instead.
By \ref{KMUcor}(b) it suffices to check the conditions of 
\ref{KMUcor}(a) for the simple $kG'$-modules, which is straightforwardly
done explicitly. This shows that for all the simple $kG'$-modules or 
$kG$-modules $S$ the heart $\CH(S)$ is indecomposable, but does not
provide any further information on the Loewy lengths of the associated
projective indecomposable modules. Actually, with considerable computational
effort it is possible to show that for the simple $kG'$-modules 
$\{k_{G'},26,246\}$ we have $\LL(P(k_{G'}))=34$ and
$\LL(P(26))=40=\LL(P(246))$, and for the simple $kG$-modules 
$S\in\{k_G,26,246\}$ we similarly have
$\LL(P(k_{G'}))=35$ and $\LL(P(26))=41=\LL(P(246))$.

\end{noth}

\section{Proofs}\label{proofs}

We are now prepared to prove the results announced in the introduction.
To do so, we will need an easy lemma, which we prove explicitly 
for convenience:

\begin{Lemma}\label{directSum}
Let $\iota{\colon}N\rightarrow M$ be an embedding of $A$-modules, such that
$$ \Hom_A\bigg(\, N/\rad(N),\, M/(\iota(N)+\rad(M)) \,\bigg)=\{0\} ,$$
that is the heads of $N$ and $M/\iota(N)$
do not have any composition factors in common. 
Then any epimorphism $\pi{\colon}M\rightarrow N$ is split, 
and $\iota$ is a splitting map.
\end{Lemma}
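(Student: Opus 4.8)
The plan is to reduce everything to showing that the composite $\theta:=\pi\circ\iota\colon N\to N$ is an automorphism of $N$. Granting this, both conclusions follow formally: the map $\iota\circ\theta^{-1}\colon N\to M$ is a section of $\pi$ (since $\pi\circ\iota\circ\theta^{-1}=\theta\circ\theta^{-1}=\Id_N$), so $\pi$ is a split epimorphism, while $\theta^{-1}\circ\pi\colon M\to N$ is a retraction of $\iota$ (since $\theta^{-1}\circ\pi\circ\iota=\Id_N$), so $\iota$ splits and $\iota(N)$ becomes a direct summand of $M$. Since $N$ is finite-dimensional over $k$, a surjective endomorphism of $N$ is automatically bijective; and by Nakayama's Lemma $\theta$ is surjective if and only if the induced map $\bar\theta\colon N/\rad(N)\to N/\rad(N)$ on heads is surjective, equivalently an isomorphism. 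Thus the whole argument comes down to analysing $\theta$ on semisimple quotients.

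Next I would pass to the head $M/\rad(M)$, which is semisimple. Using $\iota(\rad(N))\subseteq\rad(M)$ and $\pi(\rad(M))=\rad(N)$, the maps $\iota$ and $\pi$ induce a map $\iota_*\colon N/\rad(N)\to M/\rad(M)$ with image $\overline{\iota(N)}:=(\iota(N)+\rad(M))/\rad(M)$ and a surjection $\bar\pi\colon M/\rad(M)\to N/\rad(N)$, and a direct check gives $\bar\theta=\bar\pi\circ\iota_*$. Since $M/\rad(M)$ is semisimple I would split off a complement, writing $M/\rad(M)=\overline{\iota(N)}\oplus C$ with $C\cong M/(\iota(N)+\rad(M))$, which is precisely the head of $M/\iota(N)$.

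The crux is then to show that $\bar\pi$ annihilates $C$. Here the hypothesis enters, as it says exactly that $N/\rad(N)$ and the head $C$ of $M/\iota(N)$ have no simple constituent in common. Since $\iota$ is injective one has $\overline{\iota(N)}\cong\iota(N)/(\iota(N)\cap\rad(M))$, a quotient of $\iota(N)/\rad(\iota(N))\cong N/\rad(N)$; hence every simple constituent of $C$ fails to occur in $N/\rad(N)$, and a fortiori in $\overline{\iota(N)}$. As $C$ and $N/\rad(N)$ are semisimple with disjoint constituents, the restriction $\bar\pi|_C$ vanishes. Consequently $\bar\pi$ restricts to a surjection $\overline{\iota(N)}\twoheadrightarrow N/\rad(N)$, and $\bar\theta$ factors as the composite of the two surjections $N/\rad(N)\twoheadrightarrow\overline{\iota(N)}\twoheadrightarrow N/\rad(N)$. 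A surjective endomorphism of the finite-dimensional module $N/\rad(N)$ is an isomorphism, so $\bar\theta$ is an isomorphism, which is what remained to be shown.

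I expect the one genuinely load-bearing step to be the vanishing $\bar\pi|_C=0$: it is where the hypothesis is used, and it hinges on the observation that passing through the injection $\iota$ realizes $\overline{\iota(N)}$ as a quotient of $N/\rad(N)$, so that the constituents of $C$ are excluded from both summands seen by $\bar\pi$. Everything around it --- the induced maps on heads, the relations $\iota(\rad(N))\subseteq\rad(M)$ and $\pi(\rad(M))=\rad(N)$, the semisimple splitting, and the two appeals to finite-dimensionality via Nakayama --- is routine, and I would record it without belabouring the computations.
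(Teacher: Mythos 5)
Your proof is correct, but it is organized differently from the paper's. The paper argues directly on submodules: if $\iota(N)+\ker(\pi)\lneq M$, pick a maximal submodule $L$ containing it; then the simple module $M/L$ is a common quotient of $M/\iota(N)$ and of $M/\ker(\pi)\cong N$, contradicting the hypothesis on heads; a dimension count then forces $\iota(N)\cap\ker(\pi)=\{0\}$, whence $M=\iota(N)\oplus\ker(\pi)$. You instead reduce everything to showing that $\theta=\pi\circ\iota$ is an automorphism, and verify this by passing to heads: Nakayama, the decomposition $M/\rad(M)=\overline{\iota(N)}\oplus C$, and the vanishing $\bar\pi|_C=0$ forced by the disjointness of constituents. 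The two arguments establish the same underlying fact --- note that $\iota(N)+\ker(\pi)=M$ is precisely surjectivity of $\theta$, and the paper's dimension count is precisely injectivity --- but they use the hypothesis differently: the paper through a single simple quotient $M/L$ (a quick contradiction), you through the global structure of the semisimple head. The paper's version is shorter and needs no induced maps; yours makes the formal mechanism of the splitting explicit (the section $\iota\circ\theta^{-1}$ and retraction $\theta^{-1}\circ\pi$), replaces the dimension count by the standard ``surjective endomorphism of a finite-dimensional module is bijective,'' and is a reusable pattern that transfers verbatim to finite-length modules over any ring. Every step you label routine (well-definedness of $\iota_*$ and $\bar\pi$, $\bar\theta=\bar\pi\circ\iota_*$, the identification of $C$ with the head of $M/\iota(N)$) is indeed correct, so there is no gap.
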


\begin{proof}[\bf Proof]
Assume that $\iota(N)+\ker(\pi)\lneq M$. Then there is a maximal $A$-submodule
$L\lneq M$ containing $\iota(N)+\ker(\pi)$. Hence the simple $A$-module $M/L$
is an epimorphic image both of $M/\iota(N)$ and of $M/\ker(\pi)\cong N$,
contradicting the assumption on composition factors. 
Hence we have $\iota(N)+\ker(\pi)=M$. Now we get
$\dim_k(\iota(N))+\dim_k(\ker(\pi))=\dim_k(N)+(\dim_k(M)-\dim_k(N))
=\dim_k(M)$, implying $\iota(N)\cap\ker(\pi)=\{0\}$,
and thus $M=\iota(N)\oplus\ker(\pi)$ as $A$-modules. 
\end{proof}


Our standard application of Lemma \ref{directSum} will be the
following: Let $G$ be a finite group, and let $N\leq M$ be self-dual
$kG$-modules, such that even $N$ and $M/N$ do not have any composition 
factors in common. Then dualising the natural inclusion map
$\iota{\colon}N\rightarrow M$ gives rise to an epimorphism
$\pi{\colon}M\cong M^*\rightarrow N^*\cong N$, and hence we 
have $M\cong N\oplus(M/N)$ as $kG$-modules


\medskip
The proofs of Theorems \ref{Lie} and \ref{4_10} will both be
based on the following statement:

\begin{Proposition}\label{llprop}
Let $p$ be odd, let $G$ be a finite group such that $p \mid |G|$, and 
assume that $\LL(P(k_G))\leq 4$. Then we have $\LL(P(k_G))=3$, 
where the heart 
$$ \CH(k_G):=\rad(P(k_G))/\soc(P(k_G)) $$ 
of $P(k_G)$ 
is simple. Writing $S:=\CH(k_G)$ the following holds:

(i)
If $S\cong k_G$, then we have $p=3$, and $G$ is $3$-nilpotent
with Sylow $3$-subgroups of order $3$.

(ii)
If $S\,{\not\cong}\,k_G$, letting 
$\CH(S):=\rad(P(S))/\soc(P(S))$ be the heart of $P(S)$,
then 

$\circ$
either we have $p=3$, and $G$ is not $3$-nilpotent, has 
Sylow $3$-subgroups of order $3$, and $\CH(S)\cong k_G$,

$\circ$
or $\CH(S)$ is strictly decomposable having $k_G$ as a direct summand.
\end{Proposition}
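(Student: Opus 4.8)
The plan is to determine the radical series of $P:=P(k_G)$ almost completely and then read off the two cases from the positions of the trivial constituents. Throughout I use that $kG$ is symmetric, so $P$ is self-dual with $P/\rad(P)\cong\soc(P)\cong k_G$, and that $\dim_k P$ is divisible by $|U|$ for a Sylow $p$-subgroup $U$. First I would bound the Loewy length: since $|U|$ is odd and $|U|\geq p\geq 3$, while $\dim_k P$ is a multiple of $|U|$, the value $\LL(P)=2$ is impossible (it forces the uniserial shape $k_G\,|\,k_G$, hence $\dim_k P=2$). With the hypothesis this leaves $\LL(P)\in\{3,4\}$.

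The core of the argument, and the step I expect to be the main obstacle, is to show $\LL(P)=3$ with $\CH(k_G)$ simple. To exclude $\LL(P)=4$ I would argue as follows. If $\LL(P)=4$, the heart $\CH(k_G)$ has Loewy length $2$, is self-dual, and has top $W:=\rad(P)/\rad^2(P)$ and socle $W^*$; whenever $W$ and $W^*$ share no composition factor, Lemma \ref{directSum} applied to the self-dual module $\CH(k_G)$ (with $N=\soc(\CH(k_G))$) splits it, contradicting Loewy length $2$. The remaining configurations (a self-dual constituent of $W$, or a dual pair) are exactly where the oddness of $p$ must enter: here I would invoke the symmetric $G$-invariant form carried by the self-dual projective $P$ for odd $p$, pairing the $i$-th radical layer with the $(\LL(P)-1-i)$-th, together with the parity phenomenon that the group algebra of a $p$-group has \emph{odd} Loewy length when $p$ is odd (a Jennings-series computation, $\LL=1+(p-1)(\cdots)$), to force $\LL(P)$ odd, hence $\LL(P)=3$. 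Granting this, $\CH(k_G)$ is semisimple and self-dual, and I would then show it is simple by the same self-duality together with the symmetry $[P(k_G){:}\,T]=[P(T){:}\,k_G]$ of the Cartan matrix, peeling off any second constituent; making precise exactly where oddness of $p$ is indispensable for both the exclusion of $\LL(P)=4$ and the simplicity is the delicate point.

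Granting $\LL(P)=3$ and $\CH(k_G)=:S$ simple, consider first the case $S\cong k_G$. Then $P$ is uniserial with all three composition factors $k_G$, so $\dim_k P=3$; as $|U|\mid\dim_k P$ and $|U|\geq 3$ this forces $p=3$ and $|U|=3$. Moreover $P(k_G)$ involves no simple module other than $k_G$, so by Cartan symmetry no other simple module is linked to $k_G$, whence $k_G$ is the unique simple module of $B_0(G)$. For a cyclic defect group of order $3$ this means inertial index $1$, i.e. $N_G(U)=C_G(U)$, and Burnside's normal $p$-complement theorem gives that $G$ is $3$-nilpotent. This is case (i).

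In the case $S\not\cong k_G$ the module $P$ is uniserial $k_G\,|\,S\,|\,k_G$, and, being a self-dual simple constituent of the self-dual module $\CH(k_G)$, the module $S$ is self-dual; hence $P(S)$ is self-dual with top and socle $\cong S$. By Cartan symmetry $[P(S){:}\,k_G]=[P(k_G){:}\,S]=1$, so $k_G$ occurs exactly once in $P(S)$, necessarily inside $\CH(S)$. Since $\Ext^1(k_G,S)\neq\{0\}$ and $\Ext^1(k_G,S)\cong\Ext^1(S,k_G)$ (both modules being self-dual), $k_G$ occurs in the top of $\CH(S)$, and as $\CH(S)$ is self-dual it occurs in its socle as well. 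A composition factor of multiplicity one appearing simultaneously in the top and the socle splits off as a direct summand, so $k_G$ is a direct summand of $\CH(S)$. If $\CH(S)=k_G$, then $P(S)$ is uniserial $S\,|\,k_G\,|\,S$, and now $|U|$ divides both $\dim_k P(k_G)=\dim_k S+2$ and $\dim_k P(S)=2\dim_k S+1$, hence divides $2(\dim_k S+2)-(2\dim_k S+1)=3$, giving $p=3$, $|U|=3$, while the presence of two simple modules $k_G,S$ in $B_0(G)$ shows $G$ is not $3$-nilpotent. Otherwise $\CH(S)=k_G\oplus C$ with $C\neq\{0\}$, i.e. $\CH(S)$ is strictly decomposable with $k_G$ as a direct summand. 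This is case (ii).
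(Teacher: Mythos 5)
Your case analysis \emph{after} the reduction to $\LL(P(k_G))=3$ with simple heart is essentially correct and parallel to the paper's (your dimension-divisibility tricks, e.g.\ $|U|$ dividing both $\dim_k S+2$ and $2\dim_k S+1$, hence dividing $3$, are pleasant substitutes for the paper's Cartan-determinant and Brauer-theorem arguments). But the two steps you yourself flag as ``the delicate point'' are precisely where the proposal has genuine gaps, and neither of your proposed arguments works.

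First, the exclusion of $\LL(P(k_G))=4$. Your ``generic'' case misapplies Lemma \ref{directSum}: that lemma only asserts that an epimorphism $\pi\colon M\rightarrow N$, \emph{if one exists}, is split. Taking $M=\CH(k_G)$ and $N=\soc(\CH(k_G))\cong W^*$, any epimorphism $M\rightarrow N$ would factor through the head $W$ of $M$, so it exists only if $W^*$ is a direct summand of $W$ --- which is exactly impossible in the case you treat, where $W$ and $W^*$ share no composition factor. So the lemma is vacuous there: nothing splits, and no contradiction arises; a self-dual module of Loewy length $2$ with head $W$ and radical $=$ socle $\cong W^*$ is a perfectly consistent shape on the level of pure module theory. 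Your ``remaining'' case rests on a parity principle --- that $\LL(P(k_G))$ must be odd for odd $p$ --- which you do not prove and which does not follow from Jennings: the Jennings-series computation shows $\LL(kU)$ is odd for a $p$-group $U$, and freeness of $P(k_G){\downarrow}_U$ only yields the inequality $\LL(P(k_G))\geq\LL(kU)$, not any congruence. No such general parity theorem is available; ruling out the single even value $4$ is exactly the content of the result the paper cites at this point, \cite[Corollary]{Koshitani1996}, a genuinely nontrivial theorem that cannot be recovered from self-duality plus a symmetric form. Second, the simplicity of the heart: once $\LL(P(k_G))=3$ the heart is semisimple, but your plan to ``peel off any second constituent'' via self-duality and Cartan symmetry is not an argument --- self-duality only forces the multiset of constituents to be closed under duality, and Cartan symmetry provides no splitting. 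The paper obtains simplicity from Webb's theorem \cite[Theorem E]{Webb1982} (the heart of $P(k_G)$ is indecomposable unless $p=2$ and the Sylow $2$-subgroups are dihedral), an Auslander--Reiten-theoretic input you would need in some form. A further minor omission: both Webb's theorem and your appeals to cyclic-block/inertial-index theory require $k$ algebraically closed, so the paper's opening reduction to that case is needed in your argument as well.
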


\begin{proof}[\bf Proof]
Since $\rad(\tilde{k}G)=\rad(kG)\otimes_k\tilde{k}$ for any 
separable field extension $k\subseteq\tilde{k}$, implying that
$\LL(P(k_G))=\LL(P(\tilde{k}_G))$, we can assume that 
$k$ is algebraically closed; 
see also \cite[Lemma 1.6 and its proof]{Koshitani2015}.

By Maschke's Theorem, we have $\LL(P(k_G))\geq 2$.
Suppose that $\LL(P(k_G))=2$,
then $k_G$ is the only simple $kG$-module in $B_0(G)$, 
and from the Cartan invariant $c_{11}(G)=[P(k_G)\colon k_G]=2$ we conclude
that $p=2$, a contradiction. 
Similarly, it follows from \cite[Corollary]{Koshitani1996} 
that $\LL(P(k_G))\,{\not=}\,4$.

%

Hence we have $\LL(P(k_G))=3$. Thus $\CH(k_G)$ is semi-simple, 
hence is simple by \cite[Theorem E]{Webb1982}, so we write $S:=\CH(k_G)$.
If $S\cong k_G$, then $k_G$ is the only simple $kG$-module in $B_0(G)$,
hence by Brauer's Theorem \cite[Theorem V.8.3]{NagaoTsushima} we conclude
that $G$ is $p$-nilpotent, and from $c_{11}(G)=[P(k_G)\colon k_G]=3$ we 
conclude that $p=3$ and that the Sylow $p$-subgroups of $G$ have order $3$.

Thus we may assume that $S\,{\not\cong}\,k_G$, and let
$\CH(S):=\rad(P(S))/\soc(P(S))$.
Since $1=c_{1,S}(G)=[P(k_G)\colon S]=[P(S)\colon k_G]=c_{S,1}(G)$, and 
since there is a uniserial $kG$-module with composition factors $S$ and $k_G$,
Lemma \ref{directSum} implies that $\CH(S)=k_G\oplus\CH'$ 
for a $kG$-module $\CH'$.

Assume that $\CH'=\{0\}$. Then $\{k_G,S\}$ are the simple 
$kG$-modules in $B_0(G)$, and thus the Cartan matrix $C$ of $B_0(G)$
is of the form 
$$ C= \begin{pmatrix} 2 & 1 \\ 1 & 2 \\ \end{pmatrix} .$$
Hence we have $\det(C)=3$, thus the Sylow $p$-subgroups of $G$ 
have order $3$. Moreover, it follows from the theory of blocks of 
cyclic defect that the inertial index of $B_0(G)$ is $2$,
and thus $G$ is not $3$-nilpotent.
Therefore we are left with the case $\CH'\,{\not=}\,\{0\}$, 
that is $\CH(S)$ is decomposable of the desired form.
\end{proof}


\begin{proof}[\bf Proof of Theorem \ref{Lie}]
Assume to the contrary that $\LL(P(k_G))\leq 4$.
Since by Proposition \ref{finitetame} we are assuming that 
$G$ has non-cyclic Sylow $p$-subgroups, from Proposition \ref{llprop}
we infer the existence of a simple $kG$-module $S{\not\cong}k_G$, 
such that $\CH(S)$ is decomposable.
By \ref{AR} this means that $S$ does not lie at the end of
its connected component in the stable Auslander-Reiten quiver, 
contradicting the Kawata--Michler--Uno Theorem \ref{KawataMichlerUno}.
\end{proof}

\begin{proof}[\bf Proof of Corollary \ref{4_12}]
This follows immediately from Theorem \ref{Lie}.
\end{proof}

\begin{proof}[\bf Proof of Theorem \ref{4_10}]
Set $H:=O^{p'}(G)$; note that $p\mid |H|$. 
Then we have $O^{p'}(H)=H$, and $H\leq G$ being a characteristic subgroup,
from $O_{p'}(G)=\{1\}$ we infer $O_{p'}(H)=\{1\}$.

By \cite[Lemma 4.1]{KoshitaniMiyachi2001} we have $\LL(B_0(H))=4$,
where $B_0(H)$ is the principal block algebra of $kH$.
Hence we have $\LL(P(k_H))\leq 4$, and thus from Proposition \ref{llprop}
we get that the heart $S:=\CH(k_H)$ of $P(k_H)$ is simple; see also
\cite[Proposition 4.6]{KoshitaniKuelshammerSambale2014}.
Moreover, if $S\cong k_H$, then from $O_{p'}(H)=\{1\}$ we conclude
that $H\cong C_3$, hence $\LL(B_0(H))=\LL(P(k_H))=3$, a contradiction.
Hence we have $S{\not\cong}k_H$, so that the Cartan invariant 
$c_{11}(H)=2$.

We are prepared to show that $H$ is non-abelian simple:
Suppose 
that there exists
$N$ such that $\{1\}\,{\neq}\,N \lhd H$. 
Then from $O_{p'}(H)=\{1\}$ and $O^{p'}(H)=H$
we conclude that both $p\mid |N|$ and $p\mid |H/N|$,
contradicting \cite[Lemma 2.5]{Koshitani2014}. 
Moreover, if $H$ were abelian, then $H\cong C_p$, 
so that $\LL(B_0(H))=\LL(kH)=p$, a contradiction.

Next we consider the Fitting subgroup $F(G)$. 
Since by assumption we have $O_{p'}(G)=\{1\}$,
it remains to consider $O_p(G)$. Since $H$ is non-abelian simple
we have $O_p(G)\cap H=\{1\}$. From this we get 
$O_p(G)\cong O_p(G)H/H \leq G/H$, where the latter is a $p'$-group,
so that $O_p(G)=\{1\}$, hence $F(G)=\{1\}$, that is
$F^*(G)=E(G)$.

Finally we consider the layer $E(G)$. 
Clearly, as $H$ is a component of $G$, we have $H\leq E(G)$.
Hence in order to claim that $H=E(G)$ we have to show that
there is no other component of $G$.
Suppose to the contrary that there is a component $Q\,{\not=}\,H$ of $G$;
note that $Q\not\leq H$.
Then let $N:=\langle Q^g \mid g\in G \rangle\unlhd G$ 
be the normal closure of $Q$ in $G$. Hence we have $N\cap H=\{1\}$,
since otherwise, $H$ being simple, we had $N\cap H=H$, that is
$Q\leq N\leq H$, a contradiction.
From this we get $N\cong NH/H \leq G/H$, so that
$N$ is normal $p'$-subgroup of $G$, hence $O_{p'}(G)=\{1\}$
implies $Q\leq N=\{1\}$, a contradiction.
\end{proof}


\medskip
We end this paper with a concluding remark.

\begin{Remark}{\bf `Minimal' projective indecomposable modules.}
\label{malleweigel}
(a)
Given a finite group $G$ and a simple $kG$-module $S$,
let $c(S)$ be the integer defined by
$$ c(S):=\frac{\dim_k(P(S))}{|G|_p} ,$$ 
where $|G|_p$ is the largest power of $p$ dividing $|G|$.
Following \cite{MalleWeigel2008}, if $c(S)=1$ then 
the projective indecomposable module $P(S)$ is called minimal.

Now, given a normal subgroup $N\unlhd G$, 
Malle--Weigel \cite[Proposition 2.2]{MalleWeigel2008} show 
`supermultiplicativity' $c(k_G)\geq c(k_N)\cdot c(k_{G/N})$, 
they give sufficient conditions as to when multiplicativity holds, 
and they ask whether this is possibly always fulfilled.

However, even more generally, given any simple $kG$-module $S$ 
on which $N$ acts trivially, and denoting its deflation to $G/N$
by $\overline{S}$, it is an immediate consequence of the 
Alperin--Collins--Sibley Theorem, see \cite[Corollary 1]{ACS}, that 
$c(S)=c(k_N)\cdot c(\overline{S})$; 
see also \cite[Lemma 2.6, Section 4]{Willems1980} and
\cite[Theorem VII.14.2]{HuppertII}.
In particular, the above question has an affirmative answer.

\medskip
(b)
We are mentioning this, as we are going to indicate two 
alternative ways to proceed in the 
final step in the proof of Theorem \ref{Lie}; they both need 
ordinary representation theory. Hence let $G$ be a simple or an almost
simple finite group of Lie type,
and recall that we know that $S:=\CH(k_G)$ is simple,
where $S{\not\cong}k_G$.
 
(i) 
Firstly, for the untwisted and twisted cases we have $|U|=q^N$, while for 
the very-twisted cases we have $|U|=(q^2)^N$, where $N$ is the number 
of positive roots in the root system associated with $\bfG$. 
We have $\dim_k(S)=c(k_G)\cdot |U|-2$, while by
\cite[Theorem 3.7]{Humphreys2006} and a similar result for the
very-twisted cases, we have $\dim_k(S)<|U|$. This implies $c(k_G)=1$, 
that is $P(k_G)$ is minimal, 
contradicting \cite[Theorem 5.8]{MalleWeigel2008};
to prove the latter ordinary character theory of
finite groups of Lie type is used. 

(ii) 
Secondly, more straightforwardly, we observe that $\dim_k(S)=|U|-2$. 
Then, for the untwisted and twisted cases we may infer $N=1$,
proceeding similar to \cite[Proof of Theorem 3.7]{Humphreys2006}, 
using Weyl's character formula for ordinary Verma modules;
see also \cite[Proposition 5.1]{LassueurMalleSchulte2016}.
Thus we get $G=\SL_2(q)$, and hence $\dim_k(S)=q-2$.
Whence Steinberg's Tensor Product Theorem, see 
\cite[Theorem 2.7]{Humphreys2006}, implies $q=p$, a contradiction.
For the the very-twisted case ${}^2 G_2(\sqrt{3}^{2f+1})$ a similar
dimension estimate yields a contradiction,
see \cite[Chapter 20]{Humphreys2006}.
\end{Remark}

\section*{\small Acknowledgments}

{\small
A part of this work was done while the first author was visiting 
University of Jena several times in 2015--2016, 
while the second author was visiting  University of Chiba in 2015, 
and also while the authors were both visiting the
Centre Interfacultaire Bernoulli (CIB) 
at \'Ecole Polytechnique F\'ed\'erale de Lausanne (EPFL) 
during the Semester `Local representation theory and simple groups' in 2016.
The authors are grateful to these institutions for their hospitality.
The first author was partially supported by the Japan Society for 
Promotion of Science (JSPS), Grant-in-Aid for Scientific Research
(C)15K04776, 2015--2018; and also by the CIB in EPFL. 
The second author was financially supported by the
German Science Foundation (DFG) Scientific Priority Programme
SPP-1478 `Algorithmic and Experimental
Methods in Algebra, Geometry, and Number Theory'.}


\end{document}